\theoremstyle{plain}
\title{$\varepsilon$-hypercyclic operators that are not $\delta$-hypercyclic for $\delta<\varepsilon$}
\author{Fr\'ed\'eric Bayart}
\address{Laboratoire de Math\'ematiques Blaise Pascal UMR 6620 CNRS, Universit\'e Clermont Auvergne, Campus universitaire C\'ezeaux, 3 place Vasarely, 63178 Aubière Cedex, France.}
\email{frederic.bayart@uca.fr}
\subjclass{47A16,47B37}
\keywords{hypercyclic operators, operator weighted shifts}
\newcommand{\veps}{\varepsilon}
\def\RR{\mathbb R}
\def\NN{\mathbb N}
\def\CC{\mathbb C}
\newtheorem{theorem}{Theorem}[section]
\newtheorem{lemma}[theorem]{Lemma}
\newtheorem{corollary}[theorem]{Corollary}
\theoremstyle{definition}}
\theoremstyle{definition}}
\theoremstyle{definition}}
\theoremstyle{definition}\newtheorem{definition}[theorem]{Definition}}
\theoremstyle{definition}}
\theoremstyle{definition}\newtheorem{remark}[theorem]{Remark}}
\newtheorem{question}[theorem]{Question}
\begin{document}

\begin{abstract}
For every fixed $\varepsilon\in (0,1)$,  we construct an operator on the separable
Hilbert space which is $\delta$-hypercyclic for all $\delta\in(\veps,1)$ and which is not $\delta$-hypercyclic for all $\delta\in(0,\veps)$.
\end{abstract}

\maketitle

\section{Introduction}

Let $X$ be a separable infinite dimensional Banach space. During the last decades the properties of the orbits of operators acting on $X$ have been widely studied. In particular, the notion of hypercyclic operators, namely operators with a dense orbit, has drawn the attention of many mathematicians (see for instance \cite{BM09}). It seems natural in this context to investigate operators having orbits with a property slightly weaker than denseness. Does this imply that the operator admits a dense orbit?
For instance, N. Feldman has shown in \cite{Fel02b} that if there is an orbit of $T\in\mathcal L(X)$ which meets every ball of radius $d>0$, then $T$ is hypercyclic. 

The following definition concerning operators admitting an orbit which intersects every cone of aperture $\veps$ has been introduced in \cite{BGM09}.

\begin{definition}
Let $\veps\in(0,1)$. A vector $x\in X$ is called an $\veps$-\emph{hypercyclic vector} for $T\in\mathcal L(X)$ provided
for every non-zero vector $y\in X$, there exists an integer $n\in \mathbb N$ such that $\|T^n x-y\|\leq\veps\|y\|$.
The operator $T$ is called $\veps$-\emph{hypercyclic} if it admits an $\veps$-hypercyclic vector.
\end{definition}

In \cite{BGM09}, the authors have shown that for every $\veps\in(0,1)$, there exists an $\veps$-hypercyclic operator on $\ell^1(\NN)$ which is not hypercyclic.
This was refined in \cite{BAYEPSHC} and \cite{Tap22} where similar examples are given on $\ell^2(\NN)$ and on more general spaces. 
Moreover it is pointed out in \cite[Remark 4.7]{Tap22} that the $\veps$-hypercyclic operator which is considered in that paper is not even $\delta$-hypercyclic for some $\delta\in(0,\veps)$.

This leaves open the following natural question: let $X$ be a Banach space, let $0<\delta<\veps<1$. Can we distinguish the class of $\delta$-hypercyclic operators
and that of $\veps$-hypercyclic operators acting on $X$? We give a positive answer for a large class of Banach spaces. To state our result we recall some terminology.
Let $(e_n)_{n\geq 0}$ be a basis of $X$ (namely every $x\in X$ writes uniquely $\sum_{n\geq 0} x_ne_n$) and let $C\geq 1$. 
We say that $(e_n)_{n\geq 0}$ is $C$-unconditional if for any $N\geq 0$, for any finite sequences of scalars $(a_n)_{n=0,\dots,N}$ and $(b_n)_{n=0,\dots,N}$ such that 
$|b_n|\leq |a_n|$ for all $n=0,\dots,N$, then 
$$\left\|\sum_{n=0}^N b_n e_n\right\|\leq C\left\|\sum_{n=0}^N a_n e_n\right\|.$$
Let us fix now $X$ and $Y$ two Banach spaces and suppose that $(f_n)$ is a $1$-unconditional basis of $Y$. We denote by $\bigoplus_Y X$ the vector space 
$$\bigoplus_Y X:=\left\{(x_n)\in X^{\NN}:\ \sum_{n=0}^{+\infty}\|x_n\|_X f_n\in Y\right\}$$
and we endow it by 
$$\|(x_n)\|=\left\|\sum_{n=0}^{+\infty}\|x_n\|_X f_n\right\|_Y.$$
It is standard that $\bigoplus_Y X$ is a Banach space.

Our main theorem now reads.

\begin{theorem}\label{THM:MAIN}
 Let $X$ be an infinite dimensional separable Banach space with a $1$-uncon\-ditional basis,
 let $Y$  be an infinite dimensional separable Banach space with a normalized $1$-unconditional basis
 such that the associated backward shift operator is continuous. For all $\veps\in(0,1)$,
 there exists an operator on $Z=\bigoplus_Y X$ which is not $\delta$-hypercyclic
 for all $\delta\in (0,\veps)$ and which is $\delta$-hypercyclic for all $\delta\in(\veps,1)$.
\end{theorem}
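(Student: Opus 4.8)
The plan is to realise the desired operator as an operator‑weighted backward shift on $Z=\bigoplus_Y X$. Writing $z=(z_0,z_1,z_2,\dots)$ with $z_n\in X$, set
$$T(z_0,z_1,z_2,\dots)=(A_1z_1,A_2z_2,A_3z_3,\dots),\qquad A_n\in\mathcal L(X),\ \ M:=\sup_n\|A_n\|<\infty .$$
Since $(f_n)$ is normalised and the backward shift on $Y$ is continuous, one checks at once that $T$ is a bounded operator on $Z$, with $\|T\|\le M\|S_Y\|$, where $S_Y$ is the backward shift on $Y$. The weight operators $A_n$ will be built coordinate‑wise on the $1$‑unconditional basis $(e_j)_{j\ge 0}$ of $X$: the bulk of each $A_n$ is diagonal with very large entries — the amplification that makes orbits spread, as in the usual hypercyclicity mechanism — while one distinguished pair of coordinates (say $e_0,e_1$) is mixed by a small, carefully chosen perturbation. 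The free parameter of this perturbation is calibrated so that the singular geometry of the partial products $A_1A_2\cdots A_n$ carries a \emph{relative defect of size exactly $\veps$} in the distinguished direction: after rescaling to approximate any prescribed finitely supported pattern $p$ transverse to $e_0$, the vector $A_1\cdots A_n v$ is forced to differ from a genuine multiple of $p$ by at least (but no more than) $\veps$ of its norm. The blocks will be organised according to a tree‑like schedule (this is where the $\omega^{<\omega}$ bookkeeping enters) so that infinitely many approximation tasks can be interleaved into a single universal vector.

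For the positive part, fix $\delta\in(\veps,1)$ and a countable $\|\cdot\|_Z$‑dense family $\{y^{(t)}:t\ge1\}$ of finitely supported vectors of $Z$. We produce one universal vector $u$. Choosing a very fast increasing sequence of times $(m_t)$ and prescribing the coordinates of $u$ living in the blocks near $m_t$, we arrange that $T^{m_t}u$ agrees with $y^{(t)}$ on every coordinate except in the distinguished subspace of block $0$, where the unavoidable defect contributes an error of relative size $\veps$. Because the diagonal entries of the $A_n$ are enormous, the prescribed coordinates of $u$ have extremely small norm; the index sets around distinct $m_t$ being disjoint, the prescriptions are consistent and $u\in Z$; and $\veps<\delta$ gives $\|T^{m_t}u-y^{(t)}\|\le\delta\|y^{(t)}\|$. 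A routine approximation argument, using $1$‑unconditionality of both bases to control truncations and the contribution of far blocks, upgrades this from the dense family to every nonzero $y\in Z$, the slack $\delta-\veps>0$ absorbing the density error.

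For the negative part, fix $\delta\in(0,\veps)$ and an arbitrary $x\in Z$; we must exhibit a nonzero $y$ with $\|T^nx-y\|\ge\delta\|y\|$ for all $n$. Since $(T^nx)_0=A_1A_2\cdots A_nx_n$, the defect built into the partial products says that this vector cannot approximate a multiple of the distinguished direction of block $0$ without a relative error $\ge\veps$, once the "amplification balance" between the distinguished and the transverse directions is in the defective regime. We take $y$ to be a multiple of that distinguished direction, with the scaling tuned to $x$ (in the spirit of the non‑hypercyclicity argument of \cite{BGM09}) so that this balance is forced for every $n$ simultaneously; then, by $1$‑unconditionality, $\|T^nx-y\|$ is bounded below by the block‑$0$ contribution, which is $\ge\veps\|y\|>\delta\|y\|$ for all $n$. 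Hence $T$ is not $\delta$‑hypercyclic.

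The main obstacle — and the technical core of the argument — is the simultaneous calibration of the $A_n$: the diagonal amplification must be large enough, and the blocks scheduled finely enough, that a \emph{single} universal vector of finite norm handles every target for every $\delta>\veps$; while the perturbation must be robust enough that the relative defect is genuinely $\ge\veps$, uniformly over all $x$ and all $n$ — including a bad target that must be chosen depending on $x$ — yet tight enough that it never exceeds $\veps$, so that the threshold separating $\delta$‑hypercyclicity from its failure is exactly $\veps$ and not some other constant.
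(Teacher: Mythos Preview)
Your outline captures the broad architecture correctly: the operator is indeed an operator-weighted backward shift $T(z_0,z_1,\dots)=(A_1z_1,A_2z_2,\dots)$, the $A_n$ are mostly diagonal with large entries, and the obstruction to $\delta$-hypercyclicity for $\delta<\veps$ comes from a geometric defect in the partial products $A_1\cdots A_n$ that prevents close approximation of multiples of $e_0$. But what you have written is a programme, not a proof, and at least two of the concrete choices you announce would not work as stated.

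First, you propose to mix only the fixed pair $(e_0,e_1)$. The paper instead mixes $e_0$ with $e_k$, a \emph{fresh} basis vector at each stage $k$. This is not cosmetic. With your choice, in order to approximate a target $y$ whose block-$0$ entry has nonzero coefficients along both $e_0$ and $e_1$, the $e_1$-content of the preimage is already committed to producing the $\veps$-defect in the $e_0$ direction; you cannot simultaneously prescribe it to match $e_1^*(y_0)$. Using a new $e_k$ at stage $k$ decouples the defect mechanism from the coordinates $e_0,\dots,e_{k-1}$ already needed for the target, and this is precisely what allows the positive part to go through with error $\le\veps\|y\|$ rather than some uncontrolled multiple.

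Second, the paper contains no tree or $\omega^{<\omega}$ bookkeeping; the blocks are indexed linearly by $k$, with $A_{m_k+1}\cdots A_{m_{k+1}}=\mathrm{Id}$ so that products across block boundaries telescope. What is genuinely missing from your sketch is the two-dimensional computation that pins the threshold at exactly $\veps$: one shows (Lemma~\ref{LEM:GEO2}) that in $\mathrm{span}(e_0,e_k)$ there exists $\omega_k\in[\veps,\veps(1-\veps)^{-1}]$ with $\min_{y\in\CC}\|(y-1)e_0+y\omega_k e_k\|=\veps$, and then designs the $A_n$ so that $A_{m_k+1}\cdots A_{m_k+j}$ sends $e_k$ onto the line $\CC(e_0+\omega_k e_k)$. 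This lemma is what makes the defect simultaneously $\ge\veps$ (for the negative part) and achievable with equality (for the positive part); your last paragraph correctly flags this calibration as the crux but does not supply it. Finally, the paper does not build a single universal vector by hand; it verifies the $\veps$-hypercyclicity criterion of Theorem~\ref{thm:epshccriterion}, which sidesteps the interference between prescriptions at different times that your outline leaves unaddressed.
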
 

Observe that if $X$ is either $c_0(\NN)$ or $\ell^p(\NN)$, $p\in[1,+\infty)$, then $X$
is isometric to $\bigoplus_X X$ by using the canonical basis of $X$. Therefore, it satisfies the assumptions of the previous theorem.
Recall also that if $T$ is $\veps$-hypercyclic for all $\veps\in(0,1)$, then it is hypercyclic (see \cite[Theorem 1.3]{BGM09}).

We will need a way to prove that an operator is $\veps$-hypercyclic. We state here a variant of the $\veps$-hypercyclicity criterion given in  \cite[Theorem 1.2]{Tap22}.
Its proof is completely similar.

\begin{theorem}\label{thm:epshccriterion}
 Let $X$ be an infinite dimensional separable Banach space, let $T\in\mathcal L(X)$ and let $\veps\in(0,1)$. 
 Assume that there exist a dense subset $\mathcal D$ of $X$, a sequence $(u(k))$ dense in $X$ such that that, for all $k\geq 0,$
 $u(k)=u(l)$ for infinitely many integers $l$, a sequence $(v(k))$ of vectors in $X$ and an increasing sequence
 of positive integers $(n_k)$ such that 
 \begin{itemize}
  \item $\lim_{k\to+\infty}\|T^{n_k}x\|=0$ for all $x\in\mathcal D$;
  \item $\lim_{k\to+\infty} \|v(k)\|=0;$
  \item for all $k\geq 0$, $\|T^{n_k}v(k)-u(k)\|\leq \veps \|u(k)\|.$
 \end{itemize}
 Then $T$ is $\delta$-hypercyclic for all $\delta>\veps.$ 
 \end{theorem}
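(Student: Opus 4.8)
The plan is to run a Baire category argument modeled on the proof of the classical Hypercyclicity Criterion, carefully bookkeeping the multiplicative constants. Fix $\delta>\veps$; if $\delta\geq1$ the statement is trivial (the vector $x=0$ is $\delta$-hypercyclic, since $\|T^{0}0-y\|=\|y\|\leq\delta\|y\|$), so I assume $\delta\in(\veps,1)$ and fix a number $\veps'$ with $\veps<\veps'<\delta$ together with a countable dense subset $E$ of $X\setminus\{0\}$. For $y\in E$ and $n\geq0$ I would put $A(y,n)=\{x\in X:\|T^nx-y\|<\veps'\|y\|\}$, an open set, then $A(y)=\bigcup_{n\geq0}A(y,n)$, and finally $G=\bigcap_{y\in E}A(y)$, which is a $G_\delta$ subset of $X$. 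The whole proof then reduces to showing: (i) each $A(y)$ is dense, so that $G$ is residual, hence nonempty, by Baire's theorem; and (ii) every $x\in G$ is a $\delta$-hypercyclic vector for $T$.

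For (i), let $x_0\in X$ and $\rho>0$ be given. I would first choose $c,s>0$ small enough that $\veps(1+c)\|y\|+c\|y\|+s<\veps'\|y\|$ --- possible since $\veps'>\veps$ and $\|y\|$ is a fixed positive number --- and then pick $x_0'\in\mathcal D$ with $\|x_0'-x_0\|<\rho/2$. Since $(u(k))$ is dense there is $k_0$ with $u(k_0)\in B(y,c\|y\|)$, and by the repetition hypothesis the set $\{l:u(l)=u(k_0)\}$ is infinite; so I can choose $k$ in it with $k$ so large that $\|v(k)\|<\rho/2$ and $\|T^{n_k}x_0'\|<s$ (using the first two hypotheses). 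Then $x:=x_0'+v(k)$ satisfies $\|x-x_0\|<\rho$, and by the triangle inequality, the third hypothesis, and $\|u(k)\|\leq(1+c)\|y\|$,
\begin{align*}
\|T^{n_k}x-y\|&\leq\|T^{n_k}x_0'\|+\|T^{n_k}v(k)-u(k)\|+\|u(k)-y\|\\
&<s+\veps(1+c)\|y\|+c\|y\|<\veps'\|y\|,
\end{align*}
so $x\in A(y,n_k)\subseteq A(y)$. Thus $A(y)$ is dense (and open by construction).

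For (ii), fix $x\in G$ and a nonzero $y\in X$. I would choose $\eta>0$, depending only on $\veps'$ and $\delta$, with $\veps'(1+\eta)+\eta\leq\delta$, and pick $y'\in E$ with $\|y'-y\|<\eta\|y\|$ (possible as $E$ is dense and $B(y,\eta\|y\|)$ is nonempty and open). Since $x\in A(y')$ there is $n$ with $\|T^nx-y'\|<\veps'\|y'\|\leq\veps'(1+\eta)\|y\|$, whence
\[
\|T^nx-y\|\leq\|T^nx-y'\|+\|y'-y\|<\bigl(\veps'(1+\eta)+\eta\bigr)\|y\|\leq\delta\|y\|.
\]
Hence $x$ is $\delta$-hypercyclic, which proves the theorem. (One could even intersect the sets $G$ over rational $\delta\in(\veps,1)$ to obtain a single vector that is $\delta$-hypercyclic for every $\delta>\veps$.)

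The part I expect to require the most care is not any individual estimate but setting up the right framework: the set of $\delta$-hypercyclic vectors is, as the definition stands, naturally only an $F_\sigma$-type condition, so Baire's theorem cannot be applied to it directly; instead one must work with the genuinely $G_\delta$ set $G$ attached to a countable dense set $E$ and the \emph{strictly} smaller constant $\veps'$, and then recover the defining condition for \emph{all} nonzero $y$ in step (ii). This recovery is exactly what forces every error term in step (i) to be controlled by a fixed fraction of $\|y\|$, uniformly in $y$, and it is precisely here that the slack $\delta>\veps$ and the repetition property of $(u(k))$ (which allows $k$ to be taken arbitrarily large while keeping $u(k)$ close to $y$) are indispensable; everything else is the standard one-step density argument of the Hypercyclicity Criterion.
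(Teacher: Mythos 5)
Your proof is correct. The paper itself does not write out an argument for this theorem; it only says the proof is "completely similar" to that of the $\veps$-hypercyclicity criterion in \cite{Tap22}, where the vector is produced by a direct inductive series construction (choosing indices $k_j$ recursively and summing the corresponding $v(k_j)$'s, with smallness conditions guaranteeing convergence and control of the cross terms). Your Baire-category route is a genuinely different, and arguably cleaner, way to reach the same conclusion: you correctly identify that the set of $\delta$-hypercyclic vectors is not visibly a $G_\delta$, and you circumvent this by interposing a constant $\veps'\in(\veps,\delta)$, working with the open sets $A(y,n)=\{x:\|T^nx-y\|<\veps'\|y\|\}$ over a countable dense set $E$, and then absorbing the passage from $y'\in E$ to an arbitrary nonzero $y$ into the slack $\delta-\veps'$. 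All the quantitative steps check out: in the density argument the repetition hypothesis on $(u(k))$ is exactly what lets you take $k$ arbitrarily large while keeping $u(k)$ within $c\|y\|$ of $y$, so that both $\|v(k)\|$ and $\|T^{n_k}x_0'\|$ can be made small, and the bound $s+\veps(1+c)\|y\|+c\|y\|<\veps'\|y\|$ is available precisely because $\veps<\veps'$. What your approach buys is a residual (not just nonempty) set of $\delta$-hypercyclic vectors, and, as you note, intersecting over a sequence $\veps'_m\downarrow\veps$ even yields a single residual set of vectors that are $\delta$-hypercyclic for every $\delta>\veps$ simultaneously; the constructive approach of \cite{Tap22} buys more explicit control of the orbit, which is sometimes needed (e.g.\ for the refinement mentioned before Corollary \ref{cor:complemented}, where one wants $(n_k)$ inside a prescribed sequence). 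One cosmetic remark: since the definition of $\delta$-hypercyclicity quantifies over $n\in\NN$, it is marginally cleaner to set $A(y)=\bigcup_k A(y,n_k)$ so that the exponent produced in step (ii) is automatically one of the positive integers $n_k$; this changes nothing in the argument.
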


 The remaining part of the paper is devoted to the proof of Theorem \ref{THM:MAIN}.

\section{Proofs}

\subsection{A geometric lemma in dimension 2}

The construction ultimately relies on the following fact regarding normed spaces of dimension 2.
It deals with the distance of some fix vector to lines depending on a parameter.

\begin{lemma}\label{LEM:GEO2}
 Let $F$ be a normed space of dimension $2$, let $(u,v)$ be a normalized basis of $F$, let $(u^*,v^*)$ be the dual basis
 and assume that $\|u^*\|=\|v^*\|=1$. For all $\veps\in(0,1)$, there exists $\omega\in[\veps,\veps(1-\veps)^{-1}]$ such that
 $$\min_{y\in\mathbb C} \|(y-1)u+y\omega v\|=\veps.$$
\end{lemma}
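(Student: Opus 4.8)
The plan is to introduce, for $\omega\geq 0$, the quantity
$$g(\omega):=\min_{y\in\mathbb C}\bigl\|(y-1)u+y\omega v\bigr\|,$$
to establish that $g$ is a continuous function of $\omega$, and then to locate the value $\veps$ by applying the intermediate value theorem on the interval $[\veps,\veps(1-\veps)^{-1}]$. (Note that this interval is nonempty since $1-\veps\leq1$ forces $\veps\leq\veps(1-\veps)^{-1}$.)

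First, one checks that the minimum is attained and that $g$ is continuous. Since $(u^*,v^*)$ is the dual basis and $\|u^*\|=1$, we have $\|(y-1)u+y\omega v\|\geq |u^*((y-1)u+y\omega v)|=|y-1|$, so the map $y\mapsto\|(y-1)u+y\omega v\|$ is coercive and the minimum is attained. Moreover $g(\omega)\leq\omega$ (taking $y=1$, see below), so any minimizer $y$ satisfies $|y-1|\leq g(\omega)\leq\omega$; hence for $\omega$ in a fixed bounded interval the minimizers remain in a fixed compact subset of $\mathbb C$, and there $g$ coincides with the minimum over that compact set of the jointly continuous function $(\omega,y)\mapsto\|(y-1)u+y\omega v\|$. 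A standard compactness argument then gives continuity of $g$ on $[0,+\infty)$.

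Next come the two one-sided estimates, both immediate from the normalization. For the upper bound, the choice $y=1$ gives $(y-1)u+y\omega v=\omega v$, so $g(\omega)\leq\|\omega v\|=\omega$; in particular $g(\veps)\leq\veps$. For the lower bound, using $\|u^*\|=\|v^*\|=1$ together with $u^*((y-1)u+y\omega v)=y-1$ and $v^*((y-1)u+y\omega v)=y\omega$, we obtain $\|(y-1)u+y\omega v\|\geq\max(|y-1|,\omega|y|)$ for every $y$. An elementary computation (the optimal value being the real number $y=(1+\omega)^{-1}$) shows $\min_{y\in\mathbb C}\max(|y-1|,\omega|y|)=\omega(1+\omega)^{-1}$, hence $g(\omega)\geq\omega(1+\omega)^{-1}$. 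Setting $\omega_1:=\veps(1-\veps)^{-1}$ and noting $1+\omega_1=(1-\veps)^{-1}$ yields $g(\omega_1)\geq\veps$.

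Finally, $g$ is continuous on $[\veps,\omega_1]$ with $g(\veps)\leq\veps\leq g(\omega_1)$, so the intermediate value theorem produces $\omega\in[\veps,\veps(1-\veps)^{-1}]$ with $g(\omega)=\veps$, which is precisely the assertion. The only mildly technical point is the continuity of $g$, that is, the attainment and the uniform control of the minimizers; the two bounds $g(\omega)\leq\omega$ and $g(\omega)\geq\omega(1+\omega)^{-1}$, which are what pin the value $\veps$ down to the prescribed interval, are one-line consequences of $\|u\|=\|v\|=\|u^*\|=\|v^*\|=1$.
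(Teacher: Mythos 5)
Your proposal is correct and follows essentially the same route as the paper: the bound $g(\veps)\leq\veps$ via $y=1$, the lower bound $g(\omega)\geq\min_y\max(|y-1|,\omega|y|)=\omega(1+\omega)^{-1}$ via the dual functionals (the paper phrases this as a two-case split on $|y|$ versus $1/(1+\omega)$), and the intermediate value theorem. The only difference is that you spell out the continuity of $g$ with a compactness/coercivity argument where the paper simply asserts it; that detail is a welcome addition but does not change the argument.
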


\begin{proof}
 When $\omega=\veps$, 
 $$\min_{y\in\CC}\|(y-1)u+y\omega v\|\leq \|\omega v\|\leq \veps.$$
 When $\omega=\veps(1-\veps)^{-1}$, for all $y\in\CC$,
 $$\|(y-1)u+y\omega v\|\geq \max(|y-1|,|y|\omega).$$
 Now, if $|y|\geq1/(1+\omega)$, $|y|\omega\geq \omega/(1+\omega)\geq\veps$ and if $|y|\leq 1/(1+\omega)$, 
 $$|y-1|\geq 1-\frac 1{1+\omega}= \veps.$$
 Therefore, $\min_{y\in\CC}\|(y-1)u+y\omega v\|\geq\veps$. The result follows by continuity
 of $\omega\mapsto \min_{y\in\CC}\|(y-1)u+y\omega v\|.$
\end{proof}

\begin{remark}
 If $\|au+bv\|=(|a|^p+|b|^p)^{1/p}$ for some $p\in(1,+\infty)$, then it is easy to prove that the value of 
$\omega$ is given by
$$\frac{\omega}{(1+\omega^{\frac p{p-1}})^{\frac{p-1}p}}=\veps$$
and that the minimum is attained at
$$y=\frac{1}{1+\omega^{\frac p{p-1}}}.$$
When $p=1,$ $\omega=\veps$ and $y=1$. When $p=\infty,$ $\omega=\frac{\veps}{1-\veps}$ and $y=\frac1{1+\omega}=1-\veps.$
This corresponds to the extremal cases of Lemma \ref{LEM:GEO2}.
\end{remark}

\subsection{The construction of a sequence of operators on $X$}
As the previous constructions of $\veps$-hypercyclic operators which are not $\delta$-hypercyclic, 
our operator will be an operator weighted shift. The next part of the proof consists in defining his weights.
We denote by $(e_n)$ (resp. $(f_n)$) the $1$-unconditional basis of 
$X$ (resp. $Y$) which appears in the statement of Theorem \ref{THM:MAIN}.
We may assume that $(e_n)$ is normalized which implies (by $1$-unconditionality) that $(e_n^*)$ is normalized too. 

The strategy is the following. At each step $k$ we will define weights $A_{m_k+1},\cdots,A_{m_{k+1}}$ 
such that the products $A_{m_k+1}\cdots A_j$, $j=m_k+1,\dots,m_{k+1}$ leave $e_0$ invariant, send $e_k$ onto the line defined by Lemma \ref{LEM:GEO2} and $e_l$
onto a multiple of $e_l$ for $l\neq k.$ Therefore, provided $e_0^*(u)$ is small, $A_{m_k+1}\cdots A_{j}(u)$ can be close to $e_0,$ but not too close. 

We proceed with the details. We set
$$\lambda=\frac{3}{\veps(1-\veps)}\textrm{ and }\kappa= (1+\lambda)+\max(1+\veps(1-\veps)^{-1},2/\veps).$$
We exhibit two sequences of integers $(m_k)_{k\geq 1}$ and $(r_k)_{k\geq 1}$ and a sequence
of operators $(A_j)_{j\geq 1}$ on $X$ such that, for all $k\geq 1$, 
\begin{enumerate}[(i)]
 \item $A_n e_0=e_0$ for all $n=m_k+1,\dots,m_{k+1}$;
 \item $A_n$ is invertible, $\|A_n\|\leq \kappa$ for all  $n=m_k+1,\dots,m_{k+1}$;
 \item $A_{m_k+1}\cdots A_{m_{k+1}}=\textrm{Id}$.
\end{enumerate}
We initialize the construction by setting $m_1=0$. We assume that the construction has been done until step $k-1$
to do it at step $k \geq 1$. 
We thus have to define $m_{k+1}$, $r_k$ and $(A_j)_{j=m_k+1,\dots,m_{k+1}}$. 
We set $F_k=\textrm{span}(e_0,e_k)$ and $G_k=\overline{\textrm{span}}(e_l:\ l\neq 0,k)$
so that $X=F_k\oplus G_k$. Let $\omega_k \in [\veps,\veps(1-\veps)^{-1}]$ be given by Lemma \ref{LEM:GEO2} for $F=F_k$
and let $y_k\in\CC$ minimizing $y\mapsto \|(y-1)e_0+y\omega_k e_k\|$.
Since $(e_n)$
is a $1$-unconditional basis of $X$, we deduce from the definition of $\omega_k$ and $y_k$ that
\begin{equation}\label{eq:main1}
 \min_{y\in\CC,\ w\in G_k} \|(y-1)e_0+y\omega_k e_k+w\|=\|(y_k-1)e_0+y_k\omega_k e_k\|=\veps.
\end{equation}
Let $r_k>0$ be a very large integer (more precise conditions on $r_k$ will be given later) and let us set $m_{k+1}=m_k+r_k+k+1.$ For $j=1,\dots,r_k+k+1$, we define $A_{m_k+j}$ by
\begin{itemize}
 \item $A_{m_k+j}(e_0)=e_0.$
 \item 
 $$\left\{\begin{array}{rcl}
           A_{m_k+1}(e_k)&=&e_0+\omega_k e_k\\
           A_{m_k+2}(e_k)&=&\cdots\ =\ A_{m_k+r_k}(e_k)=2e_k\\
           A_{m_k+r_k+1}(e_k)&=&\cdots\ =A_{m_k+r_k+k-1}(e_k)=e_k\\
           A_{m_k+r_k+k}(e_k)&=&\frac{1}{2^{r_k-1}}e_k\\[0.2cm]
           A_{m_k+r_k+k+1}(e_k)&=&-\frac 1{\omega_k}e_0+\frac 1{\omega_k} e_k.
          \end{array}\right.$$
\item for $l\neq 0,k,$
$$\left\{\begin{array}{rcl}
A_{m_k+j}(e_l)&=&\lambda e_l,\ j=1,\dots,r_k+k,\\
A_{m_k+r_k+k+1}(e_l)&=&\frac1{\lambda^{r_k+k}}e_l.
\end{array}\right.$$
\end{itemize}

The invertibility of each $A_n$ comes from the invertibility of its restriction to $F_k$ and to $G_k$. Furthermore we prove $\|A_n\|\leq\kappa$. 
For $n=m_k+1,\dots,m_{k+1}$, for $a,b\in\CC$ and $w\in G_k$, 
\begin{align*}
 \big\|A_{n}(ae_0+be_k+w)\big\|&\leq |a|+|b|\max(1+\omega_k,2,2/\omega_k)+\lambda\|w\|\\
 &\leq \kappa \|ae_0+be_k+w\|
\end{align*}
where we have taken into account that $\omega_k\in[\veps,\veps(1-\veps)^{-1}]$.

To go further with the properties of $(A_j)$ we need to compute $A_{m_k+1}\cdots A_{m_k+j}e_k$
for $j=1,\dots,r_k+k+1$. We find
$$   A_{m_k+1}\cdots A_{m_k+j}e_k= \left\{\begin{array}{ll}
                                           e_0+\omega_k e_k&j=1\\
                                           2^{j-1}e_0+2^{j-1}\omega_k e_k&j=2,\dots,r_k\\
                                           2^{r_k-1}e_0+2^{r_k-1}\omega_k e_k&j=r_k+1,\dots,r_k+k-1\\
                                           e_0+\omega_k e_k&j=r_k+k\\
                                           e_k&j=r_k+k+1.
                                          \end{array}\right.$$
We then deduce the following formula, which will be equally important:
$$A_{m_k+j}^{-1}\cdots A_{m_k+1}^{-1}e_k=      
\left\{\begin{array}{ll}
                                           -\frac 1{\omega_k}e_0+\frac1{\omega_k} e_k&j=1\\[0.2cm]
                                            -\frac 1{\omega_k}e_0+ \frac 1{2^{j-1}\omega_k} e_k&j=2,\dots,r_k\\[0.2cm]
                                            -\frac 1{\omega_k}e_0+\frac 1{2^{r_k-1}\omega_k} e_k&j=r_k+1,\dots,r_k+k-1\\[0.2cm]
                                            -\frac 1{\omega_k}e_0+\frac1{\omega_k} e_k&j=r_k+k\\[0.2cm]
                                           e_k&j=r_k+k+1.
                                          \end{array}\right.$$                                    

\subsection{The operator}

We now glue together these maps. We formally define $T$ on $Z=\bigoplus_Y X$ by
$$T(u_0,u_1,\dots)=(A_1 u_1,A_2u_2,\dots).$$
Let $K_1$ be the norm of the backward shift operator associated to $(f_n)$. Then for $u=(u_0,u_1,\dots)$,
\begin{align*}
 \|Tu\|&=\left\|\sum_{n=1}^{+\infty}\|A_n u_n\|_X f_{n-1}\right\|_Y\\
 &\leq K_1 \left\| \sum_{n=1}^{+\infty} \|A_n\|\cdot \|u_n\|_X f_n\right\|_Y\\
 &\leq K_1\kappa \|u\|
\end{align*}
which implies that $T$ is well defined and maps boundedly $Z$ into itself.

\subsection{$T$ is not $\delta$-hypercyclic for any $\delta\in(0,\veps)$}
By contradiction, assume that $T$ is $\delta$-hypercyclic for some $\delta\in(0,\veps)$ and let $u=(u_0,u_1,\dots)$ be a $\delta$-hypercyclic vector for $T$. 
Observe that $\|u_n\|\to 0$ so that $u_{n,0}:=e_0^*(u_n)\to 0$. Therefore it is possible to fix $K>0$ such that
$$|K-u_{n,0}|\veps>\delta K \textrm{ for any }n\geq 0.$$
We set $v=(Ke_0,0,\dots)$. Let $n\geq 1$ be such that $\|v-T^n u\|\leq \delta\|v\|$ and let $k\geq 1$ be such that
$n\in[m_k+1,m_{k+1}]$. Let us write $u_n=u_{n,0}e_0+w_n$ with $e_0^*(w_n)=0$. Then
by using (i) and (iii),
\begin{align*}
 \|v-T^n u\| &\geq \|Ke_0-A_1\cdots A_n(u_n)\|\\
 &\geq \|K e_0 -A_{m_k+1}\cdots A_n(u_{n,0}e_0+w_n)\|\\
 &\geq \| (K-u_{n,0})e_0-A_{m_k+1}\cdots A_n (w_n)\|.
\end{align*}
If $n=m_{k+1}$, then $A_{m_k+1}\cdots A_n(w_n)=w_n$ and 
$$\|v-T^n u\|\geq |K-u_{n,0}|\geq\veps |K-u_{n,0}|>\delta\|v\|.$$
If $n\neq m_{k+1},$ then $A_{m_k+1}\cdots A_n(w_n)=x_ne_0+x_n \omega_k e_k+w'_n$ for some $w'_n\in G_k$ and some $x_n\in\mathbb C.$ Therefore
\begin{align*}
\|v-T^n u\|&\geq \left\|\big((K-u_{n,0})-x_n\big)e_0+x_n\omega_k e_k\right\|\\
&\geq \veps|K-u_{n,0}|>\delta\|v\|
\end{align*}
where we have used \eqref{eq:main1}. In both cases, we find a contradiction.

\subsection{$T$ is $\delta$-hypercyclic for all $\delta\in(\veps,1)$}

Let $\delta\in(\veps,1)$ and let us prove that $T$ is $\delta$-hypercyclic by applying Theorem \ref{thm:epshccriterion}.
Let $(u(k))$ be a dense sequence in $Z$ such that each $u(k)$ may be written $u(k)=(u_0(k),\dots,u_{k-1}(k),0,\dots)$
with $u_j(k)\in\textrm{span}(e_0,\dots,e_{k-1})$ and $\|u_j(k)\|\leq k$. Moreover for any $k\geq 1$,
we assume that there exist infinitely many integers $\ell$ with $u(k)=u(\ell)$. 

We want to find a sequence of vectors $(v(k))$ in $Z$ and a sequence of integers $(n_k)$ such that
$\|v(k)\|\to 0$ and $\|T^{n_k}v(k)-u(k)\|\leq \veps \|u(k)\|$ for all $k\geq 1$. 
We will define $v(k)=(0,\dots,0,v_0(k),\dots,v_{k-1}(k),0,\dots)$ where $v_0(k)$ is at the $(m_k+r_k)$-th position. 
Let $k\geq 1,$ let $j\in\{0,\dots,k-1\}$ and let us write 
$$u_j(k)=\sum_{s=0}^{k-1}u_{j,s}(k)e_s.$$
Let $l$ be the unique integer such that $m_l\leq j<m_{l+1}.$ We will search $v_j(k)$ under the form
$$v_j(k)=\sum_{\substack{s=1\\s\neq l}}^{k-1}\frac{u_{j,s}(k)}{\lambda^{r_k+j}}\lambda^{j-m_l}e_s+xe_l+ye_k$$
where $x$ and $y$ will be chosen so that 
$$\left\|A_{j+1}\cdots A_{m_k+r_k+j}(v_j(k))-u_j(k)\right\|\leq\veps \|u_j(k)\|$$
and $\|v_j(k)\|\leq k^{-2}.$ Upon this has been done, we can easily apply Theorem \ref{thm:epshccriterion} to deduce that $T$ is $\delta$-hypercyclic for $\delta>\veps.$ Indeed,
$T$ has a dense generalized kernel and, for all $k\geq 1$,
\begin{align*}
\|v(k)\|&=\left\|\sum_{j=0}^{k-1}\|v_j(k)\| f_{m_k+r_k+j}\right\|\\
&\leq \sum_{j=0}^{k-1}\|v_j(k)\|\leq \frac 1k.
\end{align*}
Moreover 
$$T^{m_k+r_k}(v(k))=(A_1\cdots A_{m_k+r_k}(v_0(k)),\dots,A_k\cdots A_{m_k+r_k+k-1}(v_{k-1}(k)),0,\dots).$$
Therefore,
\begin{align*}
 \|u(k)-T^{m_k+r_k}(v(k))\|&=\left\|\sum_{j=0}^{k-1} \left\|A_{j+1}\cdots A_{m_k+r_k+j}(v_j(k))-u_j(k)\right\|f_j\right\|\\
 &\leq  \veps \left\|\sum_{j=0}^{k-1} \|u_j(k)\| f_j\right\|\\
 &\leq \veps \|u(k)\|.
\end{align*}
So let us compute $A_{j+1}\cdots A_{m_k+r_k+j}(v_j(k))=:z_j(k).$
\begin{align*}
z_j(k)&=A_j^{-1}\cdots A_1^{-1}A_1\cdots A_{m_k+r_k+j}(v_j(k))\\
&=A_j^{-1}\cdots A_{m_l+1}^{-1}A_{m_k+1}\cdots A_{m_k+r_k+j}(v_j(k))\\
&=A_j^{-1}\cdots A_{m_l+1}^{-1}\left(\sum_{\substack{s=1\\s\neq l}}^{k-1}\lambda^{j-m_l}u_{j,s}(k)e_s+\lambda^{r_k+j}xe_l+2^{r_k-1}ye_0+2^{r_k-1}y\omega_k e_k\right).
\end{align*} 
The easiest case is when $j=m_l.$ In that case, 
$$z_j(k)=2^{r_k-1}ye_0+\sum_{\substack{s=1\\s\neq l}}^{k-1}u_{j,s}(k)e_s+\lambda^{r_k+j}xe_l+2^{r_k-1}y\omega_k e_k.$$
We simply choose $x=\frac1{\lambda^{r_k+j}}u_{j,l}(k)$ and $y=\frac{y_k}{2^{r_k-1}}u_{j,0}(k)$ so that  by \eqref{eq:main1}
\begin{align*}
\|z_j(k)-u_j(k)\|&=|u_{j,0}(k)|\cdot \|(y_k-1)e_0+y_k\omega_k e_k\|\\
&\leq\veps |u_{j,0}(k)|\leq \veps \|u_j(k)\|
\end{align*}
whereas 
$$\|v_j(k)\|\leq \sum_{s=1}^{k-1}\frac{\|u_j(k)\|}{\lambda^{r_k}}+
\frac{|y_k| \cdot \|u_{j}(k)\|}{2^{r_k-1}}\leq k^{-2}$$
provided $r_k$ is sufficiently large.

Let us now turn to $j>m_l.$ In that case, there exists $0\leq t_j\leq j-m_l$ such that
\begin{align*}
z_j(k)&=\sum_{\substack{s=1\\s\neq l}}^{k-1}u_{j,s}(k)e_s+\frac{\lambda^{r_k+j}}{\omega_l 2^{t_j}} xe_l+\left(2^{r_k-1}y-\frac{\lambda^{r_k+j}x}{\omega_l}\right)e_0+\frac{2^{r_k-1}\omega_k y}{\lambda^{j-m_l}}e_k.
\end{align*}
If we set to simplify the notations
$$\begin{array}{rcllrcl}
\lambda_j&=&\lambda^{j-m_l}&\quad\quad&\mu_j&=&2^{t_j}\\
x'&=&\displaystyle \frac{\lambda^{r_k+j}}{\omega_l 2^{t_j}}x&&y'&=&\displaystyle\frac{2^{r_k-1}}{\lambda^{j-m_l}}y\\[0.2cm]
a&=&u_{j,0}(k)&&b&=&u_{j,l}(k)
\end{array}$$
then 
$$z_j(k)-u_j(k)=\big(\lambda_j y'-\mu_j x'-a\big)e_0+(x'-b)e_l+y'\omega_k e_k.$$
We are now ready to choose $x'$ and $y'$, namely $x$ and $y$. We indeed set
$$x'=b\textrm{ and }y'=\frac{a+\mu_j b}{\lambda_j}$$
so that
$$z_j(k)-u_j(k)=\left(\frac{a+\mu_j b}{\lambda_j}\right)\omega_k e_k.$$
Therefore, by $1$-unconditionality of $(e_k)$, since $t_j\leq j-m_l$, $\lambda\geq 2$ and $\omega_k\leq (1-\veps)^{-1}$,
\begin{align*}
\|z_j(k)-u_j(k)\|&\leq \frac{1+\mu_j}{\lambda_j}\omega_k\|u_j(k)\|\\
&\leq \left(\frac 1{\lambda}+\frac{2}{\lambda}\right)\omega_k\|u_j(k)\|\\
&\leq \frac{3}{\lambda(1-\veps)}\|u_j(k)\|\leq \veps\|u_j(k)\|
\end{align*}
by our choice of $\lambda.$ Finally observe that 
\begin{align*}
\|v_j(k)\|&\leq \sum_{\substack{s=1\\s\neq l}}^{k-1} \frac{\|u_j(k)\|}{\lambda^{r_k}}+\frac{\omega_l 2^{t_j}}{\lambda^{r_k}}\|u_j(k)\|+\frac{ \lambda^{j-m_l}}{2^{r_k-1}}\times\frac 3\lambda\|u_j(k)\|\\
&\leq k\left(\sum_{s=1}^{k-1}\frac1{\lambda^{r_k}}+\frac{\omega_l 2^{m_k}}{\lambda^{r_k}}+\frac{3\lambda^{m_k-1}}{2^{r_k-1}}\right)\leq k^{-2}
\end{align*}
provided $r_k$ has been chosen large enough.
%

\subsection{Concluding remarks and questions}

Observe that in Theorem \ref{THM:MAIN}, we assume a priori that $(f_n)$ is normalized which was not the case for $(e_n)$.
Indeed, if we normalize $(f_n)$, this could destroy the continuity of the associated backward shift operator.

Following \cite{Tap22}, we can slightly enlarge the scale of spaces where it is possible to produce such an example.
Indeed, observe that, by adjusting $(m_k)$ and $(r_k)$ during the construction (we may ask that $r_k$ is bigger
than any prescribed value), it is possible to ensure that $T$ satisfies the $\veps$-hypercyclicity criterion
with the sequence $(n_k)$ chosen as a subsequence of a prescribed sequence $(p_k)$. Arguing like in \cite[Theorem 4.10]{Tap22},
we get therefore the following corollary.
\begin{corollary}\label{cor:complemented}
 Let $Z$ be a separable Banach space. Assume that 
 $Z$ admits an infinite dimensional complemented subspace $\bigoplus_Y X$, where
 $X$ is an infinite dimensional separable Banach space with a $1$-unconditional basis and
 $Y$  is an infinite dimensional separable Banach space with a normalized $1$-unconditional basis
 such that the associated backward shift operator is continuous. Then for all $\veps\in(0,1)$,
 there exists an operator on $Z$ which is not $\delta$-hypercyclic
 for all $\delta\in (0,\veps)$ and which is $\delta$-hypercyclic for all $\delta\in(\veps,1)$.
\end{corollary}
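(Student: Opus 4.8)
The plan is to carry the operator of Theorem~\ref{THM:MAIN} over to $Z$. Fix a bounded projection $P$ of $Z$ onto the prescribed copy $W\simeq\bigoplus_Y X$ and write $Z=W\oplus V$ with $V=\ker P$. We may assume that $V$ is infinite dimensional: if $V=\{0\}$ the statement is Theorem~\ref{THM:MAIN}, and if $V$ is non-zero but finite dimensional we first split, using $1$-unconditionality of $(f_n)$, the space $W$ into the two subsums $W_1,W_2$ of $\bigoplus_Y X$ over two complementary infinite arithmetic progressions of indices — both are again of the form $\bigoplus_{Y'}X$ with $Y'$ carrying a normalized $1$-unconditional basis whose backward shift is a power of the one of $Y$, hence continuous — and then work with $W_1$ and $W_2\oplus V$ in place of $W$ and $V$. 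Fix next a mixing operator $B$ on the separable infinite dimensional space $V$; being mixing, $B$ obeys the Hypercyclicity Criterion along every increasing sequence: there are a dense set $E\subseteq V$ with $B^nx\to0$ on $E$ and maps $R_n\colon V\to V$ with $\|R_n\|\to0$ and $B^nR_nw\to w$ for every $w$ in a fixed dense sequence of $V$. We put $S:=T\oplus B$ on $Z$, $T$ being the operator of Theorem~\ref{THM:MAIN} on $W$.

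To prove that $S$ is $\delta$-hypercyclic for every $\delta\in(\veps,1)$, I would apply Theorem~\ref{thm:epshccriterion} to $S$. Re-run the construction behind Theorem~\ref{THM:MAIN} on $W$, using the freedom in the $r_k$'s to take the return times $n_k=m_k+r_k$ as large as needed, in particular so large that, for $n=n_k$, $R_{n_k}$ is an almost right inverse of $B^{n_k}$ on the first $k$ terms of the chosen dense sequence of $V$, with $\|R_{n_k}\|$ negligible. Pick a dense sequence $u(k)=(u_W(k),u_V(k))$ of $Z$ with infinitely many repetitions, the first coordinates $u_W(k)$ having the finitely supported special form used in Theorem~\ref{THM:MAIN}, and set $v(k):=(v_W(k),R_{n_k}u_V(k))$ with $v_W(k)$ the vector furnished there. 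The three requirements of Theorem~\ref{thm:epshccriterion} are checked coordinatewise: $T$ has a dense generalized kernel and $B^{n_k}\to0$ on $E$, so $S^{n_k}\to0$ on a dense subset of $Z$; $\|v(k)\|\to0$ since $\|v_W(k)\|\to0$ and $\|R_{n_k}\|\to0$; and, as $W$ and $V$ are isometrically embedded in $Z$,
$$\|S^{n_k}v(k)-u(k)\|_Z\le\|T^{n_k}v_W(k)-u_W(k)\|_W+\|B^{n_k}R_{n_k}u_V(k)-u_V(k)\|_V,$$
with the first term $\le\veps\|u_W(k)\|_W$ and the second negligible relative to $\|u(k)\|_Z$.

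For the non-$\delta$-hypercyclicity, assume $S$ had a $\delta$-hypercyclic vector $u=(u_W,u_V)$ with $\delta\in(0,\veps)$. Since $V$ is $S$-invariant, $S$ descends to $Z/V$, which under the identification $Z/V\simeq W$ carries $T$; comparing the orbit of $u$ with the vectors $(v,0)$, where $v=Ke_0\in W$ is the target built in the proof of Theorem~\ref{THM:MAIN} and $K$ is chosen large (using that the $e_0$-coefficients of the $Y$-slots of $u_W$ tend to $0$), the lower bound obtained there, applied to $u_W$, contradicts the $\delta$-hypercyclicity of $u$.

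The delicate point is the control of the constants: since the projection $P$ need not be contractive, carrying its norm along, the two arguments above only deliver $\delta$-hypercyclicity for $\delta>\veps\|P\|$ and non-$\delta$-hypercyclicity for $\delta<\veps/\|P\|$. Closing the gap to the exact threshold $\veps$ — which requires a preliminary re-decomposition of $Z$, in the spirit of the one used above, normalising the relevant projection — is the heart of the matter, and is precisely the point carried out, in an analogous situation, in \cite[Theorem~4.10]{Tap22}.
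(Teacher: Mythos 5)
Your proof follows the paper's own (very brief) argument essentially verbatim: decompose $Z$ as the complemented copy of $\bigoplus_Y X$ plus a complement carrying a mixing operator, form the direct sum with the operator $T$ of Theorem~\ref{THM:MAIN}, exploit the freedom in choosing the $r_k$ to synchronize the return times $n_k=m_k+r_k$ with the Hypercyclicity Criterion on the complement, and recover the lower bound by projecting onto the first summand. Your closing caveat about the projection constant is a real subtlety that the paper's sketch glosses over --- and one that \cite[Theorem 4.10]{Tap22} does not literally resolve, since that result concerns plain hypercyclicity, which unlike $\veps$-hypercyclicity is insensitive to renorming --- so the cleanest reading (implicit in the paper) is that the copy of $\bigoplus_Y X$ is isometric and norm-one complemented; apart from making that explicit, your route coincides with the paper's.
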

Writing $V=\bigoplus_Y X$ and $Z=V\oplus W$, the main step is to define $T=T_1\oplus T_2$
where $T_2$ is a hypercyclic operator satisfying the hypercyclicity criterion and $T_1$ is the operator defined above.
In particular, if $Z$ contains a complemented copy of $c_0(\mathbb N)$ or of $\ell^p(\mathbb N)$, $p\in[1,+\infty)$,
then it satisfies the assumptions of Corollary \ref{cor:complemented}.

To conclude, we observe that we can give an additional property of our operator when it is defined on $\ell^1$.
\begin{theorem}\label{THM:CASEL1}
 Assume that $X=Y=\ell^1$. Then $T$ is not $\veps$-hypercyclic.
\end{theorem}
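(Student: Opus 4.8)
The plan is to argue by contradiction, exploiting the two features that are special to $X=Y=\ell^1$: by the Remark following Lemma~\ref{LEM:GEO2} one has $\omega_k=\varepsilon$ and $y_k=1$ for every $k$, so that $A_{m_k+1}\cdots A_{m_k+j}$ sends $e_k$ to $c_j(e_0+\varepsilon e_k)$, where $c_j$ is the coefficient read off the explicit formulas ($c_j=1$ for $j\in\{1,m_{k+1}-m_k-1\}$, $c_j=2^{j-1}$ for $2\le j\le r_k$, $c_j=2^{r_k-1}$ for $r_k<j\le r_k+k-1$); and the norm on $Z=\bigoplus_{\ell^1}\ell^1$ is additive, $\|v\|_Z=\sum_{i\ge0}\|v_i\|_1$, so nothing can be hidden in small coordinates, and in particular $\sum_{i\ge0}\|u_i\|_1=\|u\|_Z<\infty$ for every $u\in Z$. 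Assume $u=(u_0,u_1,\dots)$ is $\varepsilon$-hypercyclic; then $\|u_n\|_1\to0$, so $u_{n,0}:=e_0^*(u_n)\to0$, and we set $M=\sup_n\|u_n\|_1<\infty$. Fix a large $K>M/(1-\varepsilon)$ and rerun the computation of Section~2.4 for the target $v=(Ke_0,0,\dots)$, now keeping every term of the additive $\ell^1$-norm. Writing $n=m_k+j$, $u_n=u_{n,0}e_0+u_{n,k}e_k+w_n$ with $w_n\in G_k$, and recalling that the $i$-th coordinate of $T^nu$ is $A_{i+1}\cdots A_{i+n}u_{i+n}$: if $n=m_{k+1}$ then $\|v-T^nu\|_Z\ge|K-u_{n,0}|\ge K-M>\varepsilon K$; if $1\le j\le m_{k+1}-m_k-1$ then $(T^nu)_0=(u_{n,0}+c_ju_{n,k})e_0+\varepsilon c_ju_{n,k}e_k+\lambda^jw_n$, and the trivial inequality $|a-t|+\varepsilon|t|\ge\varepsilon|a|+(1-\varepsilon)|a-t|$ (which is just $|a-t|+|t|\ge|a|$) applied with $a=K-u_{n,0}$, $t=c_ju_{n,k}$ gives
$$\|v-T^nu\|_Z\ \ge\ \varepsilon\,|K-u_{n,0}|\;+\;(1-\varepsilon)\bigl|K-u_{n,0}-c_ju_{n,k}\bigr|\;+\;\lambda^j\|w_n\|_1\;+\;\sum_{i\ge1}\|(T^nu)_i\|_1.$$

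Hence, whenever $\|v-T^nu\|_Z\le\varepsilon K$, we necessarily have $|K-u_{n,0}|\le K$ (so $\mathrm{Re}\,u_{n,0}>0$ unless $u_{n,0}=0$), and the last three terms are at most $\varepsilon(K-|K-u_{n,0}|)\le2\varepsilon\,\mathrm{Re}\,u_{n,0}\le2\varepsilon|u_{n,0}|$; if $u_{n,0}=0$ they are forced to vanish, whence $u_m=0$ for all $m>n$ and $u$ has finite orbit, which is absurd. Now for each integer $p>M/(1-\varepsilon)$ and each small $\eta>0$, $\varepsilon$-hypercyclicity provides a time $n=n(p,\eta)$ with $\|(pe_0,0,\dots)-T^nu\|_Z\le\varepsilon p+\eta$; by the estimate above (with $K=p$) this $n$ lies in the amplified part $2\le j\le m_{k+1}-m_k-2$ of some block $k$ — the phases with $c_j=1$ are excluded because there $u_{n,k}$ would have to be comparable to $p$, while $|u_{n,k}|\le\|u_n\|_1\le M$ — and it satisfies $|p-u_{n,0}-c_ju_{n,k}|\le\frac{2\varepsilon|u_{n,0}|+\eta}{1-\varepsilon}$, hence $|u_{n,k}|\gtrsim p/c_j$ and $\|u_n\|_1\gtrsim p/c_j$, while $\|w_n\|_1$ and $\sum_{i\ge1}\|(T^nu)_i\|_1$ are $O(|u_{n,0}|+\eta)$. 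Moreover, if one $n$ served two values $p\ne p'$, subtracting the two inequalities gives $1\le|p-p'|\le\frac{4\varepsilon|u_{n,0}|+2\eta}{1-\varepsilon}$, so $|u_{n,0}|\ge c_0$ for a fixed $c_0>0$ once $\eta<(1-\varepsilon)/2$; since $u_{n,0}\to0$ this occurs only for $n$ in a finite set, so after discarding finitely many $p$ the coordinates $n(p)$ are pairwise distinct, each block hosts only finitely many of them, $n(p)\to\infty$, and $p/c_{j(p)}\to0$.

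The remaining step — converting the infinitely many disjointly supported spikes $u_{n(p)}$, of $\ell^1$-size $\gtrsim p/c_{j(p)}$, into a contradiction with $\sum_n\|u_n\|_1=\|u\|_Z<\infty$ — is the part I expect to be the main obstacle. The intended route is a counting argument: within a block $k$, a near-optimal time in the linear phase ($c_j=2^{j-1}$, $2\le j\le r_k$) determines $c_j$, and through $c_ju_{n,k}\approx p$ it determines $p$ up to $O(1)$, so the number of $p$'s served by block $k$ from positions with $c_j\le C$ is $O(\log C)$, while the flat phase ($c_j=2^{r_k-1}$) offers only $k-2$ positions; feeding this back into the series $\sum_p p/c_{j(p)}$ one finds that $c_{j(p)}$ cannot grow fast enough along $p\to\infty$ to keep it convergent, contradicting $\sum_{n}\|u_n\|_1<\infty$. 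An alternative finish, possibly cleaner, uses $\sum_{i\ge1}\|(T^{n(p)}u)_i\|_1\to0$, hence $T^{n(p)+1}u\to0$ while $\|T^{n(p)}u\|_Z\ge p/2\to\infty$: every near-optimal time for $pe_0$ is followed one step later by a time at which the orbit is tiny, and playing this against the requirement that $u$ also $\varepsilon$-approximate the targets $(pe_0,pe_0,0,\dots)$ — whose near-optimal time $n$ forces $T^{n+1}u$ to be close to $pe_0$, hence to be one of these isolated tall vectors — yields a size/position incompatibility. Either way, it is this final quantitative step, not the additive lower bound or the rigidity extraction, that carries the weight.
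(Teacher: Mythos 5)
Your reduction to the key inequality $|a-t|+\varepsilon|t|\ge\varepsilon|a|+(1-\varepsilon)|a-t|$, the exclusion of the times $n=m_{k+1}$ and of the phases with $c_j=1$, and the observation that eventually distinct targets require distinct times are all sound and run parallel to the paper's computation. But the proof is not complete: the step you yourself flag as ``the main obstacle'' is a genuine gap, and the route you sketch to close it does not look repairable as stated. From integer targets $p$ you only extract disjointly supported coordinates of mass $\gtrsim p/c_{j(p)}$, with $c_{j(p)}$ ranging over all powers of $2$ up to $2^{r_k-1}$; since $r_k$ is unbounded, nothing in your constraints (one $p$ per position, $O(\log C)$ positions per block with $c_j\le C$) prevents a hypothetical orbit from serving each $p$ at a position with, say, $c_{j(p)}\sim p\,2^{p}$, which makes $\sum_p p/c_{j(p)}$ convergent and destroys the intended contradiction with $\sum_n\|u_n\|_1<\infty$. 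In other words, countably many integer targets together with the mass estimate $|u_{n,k}|\gtrsim p/c_{j(p)}$ simply do not use enough of the $\varepsilon$-hypercyclicity hypothesis; the alternative finish via the targets $(pe_0,pe_0,0,\dots)$ is only a heuristic sketch and would need substantial work to control how the error $2\varepsilon p$ is split between the two active coordinates.

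The paper closes the argument with a different, and much shorter, quantitative step: it uses the \emph{continuum} of targets $ae_0$, $a\in I=[2M/(1-\varepsilon),+\infty)$ where $M=\max_n|u_{n,0}|$, and for each time $n$ studies the set $I_n$ of scales $a$ that $T^nu$ can $\varepsilon$-approximate. The same two-sided estimate you derived (with $x_n$ the coefficient along $e_0+\varepsilon e_k$) gives $\mathrm{Re}(x_n)+\frac{\mathrm{Re}(u_{n,0})}{1+\varepsilon}\le a\le \mathrm{Re}(x_n)+\frac{\mathrm{Re}(u_{n,0})}{1-\varepsilon}$, so $I_n$ is contained in an interval of length $c\,\mathrm{Re}(u_{n,0})$. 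Since on $\ell^1$ one has $\sum_n|u_{n,0}|\le\|u\|<\infty$, the sets $I_n$ cannot cover the infinite ray $I$. The summable quantity to exploit is thus $|u_{n,0}|$ (the shadow of $T^nu$ on $e_0$ beyond the amplified part), not the mass $p/c_{j(p)}$ carried on $e_k$; replacing your discrete family of targets by the ray $ae_0$ and your mass bound by this interval-length bound is exactly what is needed to finish, and all the estimates you already proved feed directly into it.
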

\begin{proof}
 By contradiction assume that $u$ is an $\veps$-hypercyclic vector for $T$. 
 Let us introduce $M=\max|u_{n,0}|$, $v=(e_0,0,\dots)$, $I=\left[\frac{2M}{1-\veps},+\infty\right)$
 and for $n\in\NN,$ 
 $$I_n=I\cap\left\{a\in\RR:\ \|T^n u-av\|\leq\veps a\right\}$$
 so that $I=\bigcup_n I_n.$ We first observe that if $n=m_k$ for some $k$, then $I_n$ is empty. 
 Indeed, for these values of $n$, since $A_1\cdots A_n u_n=u_n,$ we get
 \begin{align*}
  \|T^nu-av\|&\geq \|u_n-ae_0\|\\
  &\geq |u_{n,0}-a|\\
  &\geq a-\frac{1-\veps}2a\\
  &\geq \left(\frac 12+\frac{\veps}2\right)a>\veps a=\veps\|v\|.
 \end{align*}
Let $\mathcal N=\{n\in\NN:\ I_n\neq\varnothing\}$. For $n\in\mathcal N,$ let $a\in I_n.$ Since 
$n\neq m_k,$ we know that $A_1\cdots A_n(u_n)=(u_{n,0}+x_n)e_0+x_n\veps e_k+w_n$
for some $x_n\in\CC$ and some $w_n\in \ell^1$ with $e_0^*(w_n)=e_k^*(w_n)=0.$ Therefore
\begin{align*}
 \veps a&\geq \|T^n u-av\|\\
 &\geq |u_{n,0}+x_n-a|+|x_n|\veps.
\end{align*}
Arguing as above, we find $\Re e(x_n)\geq 0$ so that
$$\veps a\geq |\Re e(u_{n,0})+\Re e(x_n)-a|+\Re e(x_n)\veps.$$
We thus find 
$$
\left\{
\begin{array}{rcl}
 \veps a\geq \Re e(u_{n,0})+\Re e(x_n)-a+\Re e(x_n)\veps\\
 \veps a\geq a-\Re e(x_n)-\Re e(u_{n,0})+\Re e(x_n)\veps
\end{array}
\right.$$
which in turn yields
$$\Re e(x_n)+\frac{\Re e(u_{n,0})}{1+\veps}\leq a\leq \Re e(x_n)+\frac{\Re e(u_{n,0})}{1-\veps}.$$
In particular, $\Re e(u_{n,0})$ must be positive and $I_n$ is contained in an interval of length $c\Re e(u_{n,0})$
for some $c>0$. But since we are working on $\ell^1,$ $\sum_n |\Re e(u_{n,0})|<+\infty,$ which contradicts
that $I=\bigcup_n I_n$ has infinite length.
\end{proof}

\begin{corollary}
 Let $\veps\in(0,1)$. There exists an operator $T$ on $\ell^1$ which is $\delta$-hypercyclic operator for all $\delta\in(\veps,1)$
 and which is not $\veps$-hypercyclic.
 \end{corollary}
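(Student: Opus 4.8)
The plan is to deduce the corollary immediately by combining Theorem~\ref{THM:MAIN} and Theorem~\ref{THM:CASEL1}, without any new construction. First I would note that $\ell^1=\ell^1(\NN)$, equipped with its canonical basis, is an infinite dimensional separable Banach space with a normalized $1$-unconditional basis, and that the associated backward shift on $\ell^1$ has norm $1$, hence is continuous. Thus $\ell^1$ is admissible for \emph{both} roles $X$ and $Y$ in Theorem~\ref{THM:MAIN}. Moreover, as observed right after the statement of Theorem~\ref{THM:MAIN}, the identity map $(x_n)\mapsto(x_n)$ is an isometry from $\bigoplus_{\ell^1}\ell^1$ onto $\ell^1$; so the space $Z=\bigoplus_Y X$ furnished by the theorem is, in this case, isometrically $\ell^1$ itself.

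Next I would apply Theorem~\ref{THM:MAIN} with $X=Y=\ell^1$ to obtain the operator $T$ on $Z\cong\ell^1$. That theorem guarantees in particular that $T$ is $\delta$-hypercyclic for every $\delta\in(\veps,1)$; this is the half of the conclusion that genuinely uses the full construction of Section~2.

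Finally, the key point is that this \emph{same} operator $T$ — the one built in the proof of Theorem~\ref{THM:MAIN} for the choice $X=Y=\ell^1$ — is precisely the operator covered by Theorem~\ref{THM:CASEL1}. Invoking that theorem, $T$ is not $\veps$-hypercyclic (note this is stronger than merely not being $\delta$-hypercyclic for $\delta<\veps$, since a $\delta$-hypercyclic vector with $\delta\le\veps$ is automatically $\veps$-hypercyclic). Putting the two conclusions together, $T$ is $\delta$-hypercyclic for all $\delta\in(\veps,1)$ and is not $\veps$-hypercyclic, which is exactly the claim. There is no real obstacle here beyond bookkeeping: the only thing to check is that the hypotheses of both theorems are satisfied by $\ell^1$ and that ``the operator $T$'' in Theorem~\ref{THM:CASEL1} is understood to be the one produced by Theorem~\ref{THM:MAIN}, which the hypothesis $X=Y=\ell^1$ makes explicit.
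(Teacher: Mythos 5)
Your proposal is correct and is exactly the paper's intended argument: the corollary is stated without proof precisely because it follows immediately by taking $X=Y=\ell^1$ in Theorem~\ref{THM:MAIN} (so that $Z=\bigoplus_Y X$ is isometrically $\ell^1$) and then invoking Theorem~\ref{THM:CASEL1} for the same operator $T$. Your side remark that non-$\veps$-hypercyclicity subsumes non-$\delta$-hypercyclicity for $\delta\le\veps$ is also accurate.
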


This leads to the following natural question:
 
\begin{question}
 Let $\veps\in(0,1)$. Does there exist an operator which is $\delta$-hypercyclic if and only if $\delta\in[\veps,1)$?
\end{question}

\end{document}